\newtheorem{theorem}{Theorem}[section]
\newtheorem{lemma}[theorem]{Lemma}
\newtheorem{remark}[theorem]{Remark}
\newenvironment{proof}[1][Proof]{\textbf{#1.} }
{\ \rule{0.75em}{0.75em}\smallskip}
\title{A new finite element approach for the Dirichlet eigenvalue Problem}
\author{
Wenqiang Xiao$^1$, Bo Gong$^1$, Jiguang Sun$^2$, and Zhimin Zhang$^{1,3}$\\
\small{$^1$ Beijing Computational Science Research Center, Beijing 100193, China.}\\
\small{$^2$ Department of Mathematical Sciences, Michigan Technological University, Houghton, MI 49931, USA.}\\
\small{$^3$ Department of Mathematics, Wayne State University, Detroit, MI 48202, USA.}}
\date{}
\begin{document}
\maketitle

\begin{abstract}
In this paper, we propose a new finite element approach, which is different than the classic Babu\v{s}ka-Osborn theory, to approximate Dirichlet eigenvalues. The Dirichlet eigenvalue problem is formulated as the eigenvalue problem of a holomorphic Fredholm operator function of index zero. Using conforming finite elements, the convergence is proved using the abstract approximation theory for holomorphic operator functions. The spectral indicator method is employed to compute the eigenvalues. A numerical example is presented to validate the theory.
\end{abstract}

{\small {\bf Keywords.} Dirichlet eigenvalue problem; finite element method; holomorphic Fredholm operator function; spectral indicator method.}

\section{Introduction}
Finite element methods for eigenvalue problems have been studied extensively \cite{babuska1991, Boffi2010AN, sun2016}.
In this paper, we propose a new finite element
approach for the Dirichlet eigenvalue problem. The problem is formulated as an eigenvalue problem of a holomorphic Fredholm operator function \cite{Gohberg2009}.
Using Lagrange finite elements, the convergence is proved by the abstract approximation theory for holomorphic operator functions \cite{karma1996I, karma1996II}.

The new approach has the following characteristics: 1) it provides a new finite element methodology which is different than the classic
Babu\v{s}ka-Osborn theory; 2) it can be applied to a large class of nonlinear eigenvalue problems \cite{Gong2019}; and
3) combined with the spectral indicator method \cite{huang2016, huang2018, Gong2019}, it can be parallelized to compute many eigenvalues effectively.

The rest of the paper is arranged as follows.
In Section 2, preliminaries of holomorphic Fredholm operator functions and the associated abstract approximation are presented.
In Section 3, we reformulate the Dirichlet eigenvalue problem as the eigenvalue problem of a holomorphic Fredholm operator function of index zero.
The linear Lagrange finite element is used for discretization and the convergence is proved using the abstract approximation results of Karma  \cite{karma1996I, karma1996II}.
In Section 4, the spectral indicator method \cite{huang2016, huang2018, Gong2019} is applied to compute the eigenvalues of the unit square.


\section{Preliminaries.}
We present some preliminaries on the eigenvalue approximation theory of holomorphic Fredholm operator functions following \cite{karma1996I,karma1996II}.
Let $X,Y$ be complex Banach spaces, $\Omega\subset\mathbb{C}$ be compact.
Denote by $\mathcal{L}(X,Y)$ the space of bounded linear operators
and $\Phi_0(\Omega,\mathcal{L}(X,Y))$ the set of holomorphic Fredholm operator functions of index zero \cite{Gohberg2009}.
Assume that $F \in \Phi_0(\Omega,\mathcal{L}(X,Y))$.
We consider the problem of finding $(\lambda,u)\in \Omega\times X,~u\neq 0$, such that
\begin{equation}\label{nonlinear eig}
F(\lambda)u=0.
\end{equation}
The resolvent set and the spectrum of $F$ are defined as
\[
\rho(F)=\{\lambda\in \Omega:~F(\lambda)^{-1} \in \mathcal{L}(Y,X) \},\quad \sigma(F)=\Omega\backslash\rho(F).
\]
Throughout the paper, we assume that $\rho(F)\neq\emptyset$. Then the spectrum $\sigma(F)$ has no cluster points in $\Omega$,
and every $\lambda \in \sigma(F)$ is an eigenvalue \cite{karma1996I}.

To approximate the eigenvalues of $F$, we consider a sequence of operator functions 
$F_n\in \Phi_0(\Omega,\mathcal{L}(X_n,Y_n)), n\in\mathbb{N}$. Assume the following properties hold.
\begin{itemize}
\item[(b1)] There exist Banach spaces $X_n,Y_n$, $n\in\mathbb{N}$ and linear bounded mappings $p_n\in\mathcal{L}(X,X_n), q_n\in\mathcal{L}(Y,Y_n)$  with the property
\begin{equation}\label{pnqn}
\lim\limits_{n \rightarrow\infty}\|p_n v\|_{X_n}=\|v\|_X,\, v\in X,\quad
\lim\limits_{n \rightarrow\infty}\|q_n v\|_{Y_n}=\|v\|_Y,\, v\in Y.
\end{equation}
\item[(b2)] $\{F_n(\cdot)\}_{n \in \mathbb{N}}$ is equibounded on $\Omega$, i.e., there exists a constant $c$ such that
\[
\|F_n(\lambda)\| \le c \quad \forall \lambda \in \Omega, n \in \mathbb{N}.
\]
\item[(b3)] $\{F_n(\cdot)\}_{n \in \mathbb{N}}$ approximates $F(\lambda)$ for every $\lambda\in\Omega$, i.e., 
\[
\lim_{n \to \infty} \|F_n(\lambda)p_n u - q_n F(\lambda) u\|_{Y_n} = 0 \quad \forall u \in X. 
\]
\item[(b4)] $\{F_n(\cdot)\}_{n \in \mathbb{N}}$ is regular for every $\lambda \in \Omega$, i.e., if $\|x_n\| \le 1\, (n \in \mathbb N)$ and
$\{F_n(\lambda)x_n\}_{n \in \mathbb N}$ is compact in the sense of Karma \cite{karma1996I}, then
$
\{x_n\}_{n \in \mathbb N}  \text{ is compact}.
$
\end{itemize}

\section{Finite Element Approximation}
Let $D\subset \mathbb{R}^2$ be a bounded Lipschitz domain.
The Dirichlet eigenvalue problem is to find $\lambda\in\mathbb{C}$ and $u\neq 0$ such that
\begin{equation}\label{possioneig}
-\triangle u =\lambda u ~~\text{in}~D \quad \text{and} \quad
u =0~~\text{on}~\partial D.
\end{equation}
The associated source problem is, given $f$, to find $u$ such that
\begin{equation}\label{sourcepro}
-\triangle u =f ~~{\rm in}~D  \quad \text{and} \quad
u =0~~{\rm{on}}~\partial D.
\end{equation}
For $f\in L^2(D)$, the weak formulation of \eqref{sourcepro} is finding $u\in H^1_0(D)$ such that
\begin{equation}\label{PriPro}
a(u,v)=(f,v)\qquad \forall~v\in H^1_0(D),
\end{equation}
where
$$a(u,v)=\int_D \nabla u\cdot\nabla v~dx,\quad (f,v)=\int_D fv~dx.$$

Due to the wellposeness of \eqref{PriPro} (see, e.g., \cite{sun2016}), there exists a linear compact solution operator
$T: L^2(D)\rightarrow H^1_0(D) \subset L^2(D)$ such that $Tf=u$.
The Dirichelt eigenvalue problem \eqref{possioneig} is equivalent to the operator eigenvalue problem: $T(\lambda u)=u$.

Assume that $0 \notin \Omega$. Define a nonlinear operator function $F: \Omega \to \mathcal{L}(L^2(D), L^2(D))$ by
\begin{equation}\label{Flambda}
F(\lambda):=T-\frac{1}{\lambda}I, \quad \lambda \in \Omega,
\end{equation}
where $I$ is the identity operator.
Clearly, $\lambda$ is a Dirichlet eigenvalue of \eqref{possioneig} if and only if $\lambda$ is an eigenvalue of $F(\lambda)$.
\begin{lemma} Let $\Omega\subset \mathbb{C}\backslash\{0\}$ be a compact set. Then
$F(\cdot):\Omega\rightarrow \mathcal{L}(L^2(D),L^2(D))$ is a holomorphic Fredholm operator function of index zero.
\end{lemma}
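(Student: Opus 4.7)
The plan is to verify the two defining properties separately: (i) holomorphy of the operator-valued map $\lambda \mapsto F(\lambda)$ on $\Omega$, and (ii) that each $F(\lambda)$ is a Fredholm operator of index zero on $L^2(D)$.

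For holomorphy, I would observe that $F(\lambda) = T - \lambda^{-1} I$ is the sum of a constant operator $T \in \mathcal{L}(L^2(D),L^2(D))$ and the scalar-valued holomorphic function $\lambda \mapsto -\lambda^{-1}$ multiplying the identity. Since $\Omega \subset \mathbb{C}\setminus\{0\}$ is compact and avoids the origin, $\lambda \mapsto \lambda^{-1}$ is holomorphic on a neighborhood of $\Omega$ and admits a power series expansion around any $\lambda_0 \in \Omega$; this immediately yields an operator-norm convergent expansion of $F(\lambda)$ around $\lambda_0$, which is the definition of holomorphy in the operator norm.

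For the Fredholm-of-index-zero property, the cleanest route is to factor
\begin{equation*}
F(\lambda) = -\frac{1}{\lambda}\bigl(I - \lambda T\bigr), \qquad \lambda \in \Omega.
\end{equation*}
Since the source problem \eqref{sourcepro} is well posed and $H^1_0(D) \hookrightarrow L^2(D)$ is compact by Rellich--Kondrachov, the solution operator $T: L^2(D) \to L^2(D)$ is compact. Hence $\lambda T$ is compact for each fixed $\lambda \in \Omega$, and by the classical Riesz--Schauder theory, $I - \lambda T$ is Fredholm of index zero. Multiplication by the nonzero scalar $-\lambda^{-1}$ is an invertible operation on $\mathcal{L}(L^2(D),L^2(D))$, which preserves both the Fredholm property and the index, so $F(\lambda)$ is Fredholm of index zero as well.

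I do not anticipate a genuine obstacle: both steps reduce to standard facts (holomorphy of $\lambda \mapsto 1/\lambda$ away from the origin, and that $I$ plus compact is Fredholm of index zero). The only point requiring a small amount of care is to state explicitly that $T$ is compact as an operator $L^2(D) \to L^2(D)$ (not merely as a map into $H^1_0(D)$), which follows from composing the bounded map $L^2(D) \to H^1_0(D)$ with the compact embedding $H^1_0(D) \hookrightarrow L^2(D)$.
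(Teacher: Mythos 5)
Your proposal is correct and follows essentially the same route as the paper: the paper's proof simply notes that $T$ is compact so $F(\lambda)$ is a compact perturbation of a nonzero multiple of the identity (hence Fredholm of index zero by Riesz--Schauder), and that holomorphy is clear since $\lambda\mapsto 1/\lambda$ is holomorphic away from the origin. You merely spell out the details (the factorization $F(\lambda)=-\lambda^{-1}(I-\lambda T)$ and the compactness of $T$ via the Rellich--Kondrachov embedding) that the paper leaves implicit.
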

\begin{proof}
Since $T$ is compact and $I$ is the identity operator, $F(\lambda), \lambda \in \Omega$ is a
Fredholm operator of index zero. Clearly, $F(\lambda)$ is holomorphic in $\Omega$.
\end{proof}

In the rest of the paper, $\|\cdot\|$ stands for $\|\cdot\|_{L^2(D)}$ and $C>0$ is a generic constant.
Let $\mathcal{T}_h$ be a regular triangular mesh for $D$ with mesh size $h$ and
$V_h\subset H^1_0(D)$ is the linear Lagrange element space associated with $\mathcal{T}_h$. Then the discrete formulation of \eqref{PriPro} is to find $u_h\in V_h$, such that
\begin{equation}\label{FemPro}
a(u_h,v_h)=(f,v_h)=(p_hf,v_h)\qquad \forall~v_h\in V_h,
\end{equation}
where $p_h:L^2(D)\rightarrow V_h$ is the $L^2$-projection operator.
Obviously, $p_h$ is bounded and $\|p_hf-f\|\rightarrow 0$ as $h\rightarrow 0$, for any $f\in L^2(D)$. Let $f_h=p_hf$. 
The well-posedness of \eqref{FemPro} implies that $\|u_h\|\leq C\|f_h\|$.

Let $u$ and $u_h$ be the solutions of \eqref{PriPro} and \eqref{FemPro}, respectively.
Then there exists $\beta > 1/2$ ($\beta = 1$ if $D$ is convex) such that (see Sec. 3.2 in \cite{sun2016})
\begin{equation}\label{L2 error}
\|u-u_h\|\leq Ch^{2\beta}\|f\|.
\end{equation}

Let $T_h : V_h \rightarrow V_h$, $T_h f_h = u_h$ be the solution operator of \eqref{FemPro}.
Define an operator function $F_h: \Omega \to \mathcal{L}(V_h, V_h)$
\begin{equation}\label{Fhlambda}
F_h(\lambda):=T_h-\frac{1}{\lambda}I.
\end{equation}
The error estimate (3.8) indicates that $\Vert T - T_h p_h\Vert \leqslant Ch^{2\beta}$. This implies that
\begin{align}
\Vert F(\lambda)|_{V_h} - F_h(\lambda)\Vert \leqslant Ch^{2\beta},
\label{F-F_h}
\end{align}
which is due to
$
\Vert F(\lambda) v_h - F_h(\lambda) v_h\Vert = \Vert T v_h - T_h v_h \Vert \leqslant \Vert T - T_h p_h\Vert \Vert v_h\Vert,
$
for all $v_h\in V_h$.

\begin{lemma} There exists $h_0>0$ small enough such that, for every compact set $\Omega\subset \mathbb{C}\backslash\{0\},$
\begin{equation}\label{bounded}
\sup\limits_{h}\sup\limits_{\lambda\in \Omega}\|F_h(\lambda)\|<\infty,\quad h<h_0.
\end{equation}
\end{lemma}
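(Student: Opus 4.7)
My plan is to bound $\|F_h(\lambda)\|$ by the triangle inequality applied to its definition \eqref{Fhlambda}, namely
$$\|F_h(\lambda)\| \le \|T_h\| + \frac{1}{|\lambda|}.$$
The second term is easy: since $\Omega$ is compact and $0\notin\Omega$, the continuous function $\lambda\mapsto |\lambda|$ attains a positive minimum $\delta>0$ on $\Omega$, so $1/|\lambda|\le 1/\delta$ uniformly in $\lambda\in\Omega$. The main work is therefore the uniform-in-$h$ bound on $\|T_h\|_{\mathcal{L}(V_h,V_h)}$.

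For the first term, I would use the well-posedness statement recorded right after \eqref{FemPro}: for any $f\in L^2(D)$ with $f_h=p_hf$, the discrete solution $u_h=T_hf_h$ satisfies $\|u_h\|\le C\|f_h\|$. Specialising to inputs already in $V_h$ (replace $f$ by $v_h\in V_h$, in which case $p_hv_h=v_h$), this yields $\|T_hv_h\|\le C\|v_h\|$ with a constant $C$ independent of $h$, hence $\|T_h\|_{\mathcal{L}(V_h,V_h)}\le C$. Combining the two estimates gives
$$\sup_h\sup_{\lambda\in\Omega}\|F_h(\lambda)\|\le C+\frac{1}{\delta}<\infty,$$
as required.

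Alternatively, and more in the spirit of the discussion surrounding \eqref{F-F_h}, one may write $F_h(\lambda)=F(\lambda)|_{V_h}+(F_h(\lambda)-F(\lambda)|_{V_h})$ and combine the uniform bound $\|F(\lambda)\|\le\|T\|+1/\delta$ (valid since $T\in\mathcal{L}(L^2(D),L^2(D))$ is compact hence bounded) with \eqref{F-F_h} to obtain $\|F_h(\lambda)\|\le\|T\|+1/\delta+Ch^{2\beta}$, which is bounded for all $h<h_0$ and all $\lambda\in\Omega$.

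There is no real obstacle here; the only point requiring a moment of care is to check that the well-posedness constant $C$ in $\|T_hv_h\|\le C\|v_h\|$ really is independent of $h$, which it is because it comes from the coercivity and boundedness of the bilinear form $a(\cdot,\cdot)$ on $H^1_0(D)$ together with the Poincar\'e inequality, none of which depends on the mesh. The smallness of $h_0$ is not actually needed for this lemma itself; it is inherited from the framework in which \eqref{L2 error} and \eqref{F-F_h} are stated.
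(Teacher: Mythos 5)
Your main argument is exactly the paper's proof: apply the triangle inequality to $F_h(\lambda)f_h = u_h - \tfrac{1}{\lambda}f_h$, use the $h$-uniform stability bound $\|u_h\|\le C\|f_h\|$ from the well-posedness of \eqref{FemPro}, and bound $1/|\lambda|$ uniformly by compactness of $\Omega$ away from $0$. The proposal is correct and essentially identical to the paper's argument.
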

\begin{proof} Let $h$ be sufficiently small and $f_h\in V_h$. Then
\[
\|F_h(\lambda)f_h\|= \left \|u_h-\frac{1}{\lambda}f_h\right \|
\leq \|u_h\|+\frac{1}{|\lambda|}\|f_h\|
\leq \left(C+\frac{1}{|\lambda|}\right)\|f_h\|.
\]
Since $\Omega$ is compact and $0 \notin \Omega$, \eqref{bounded} holds.
\end{proof}
\begin{lemma}
Let $f\in L^2(D)$. Then
\begin{equation}\label{consistent}
\lim\limits_{h\rightarrow 0}\|F_h(\lambda)p_hf-p_hF(\lambda)f\|=0.
\end{equation}
\end{lemma}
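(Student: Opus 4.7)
The plan is to exploit the cancellation of the $\frac{1}{\lambda}I$ terms and then reduce the question to two standard convergence statements: the $L^2$ error estimate for the finite element source problem and the $L^2$ approximation property of $p_h$.

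First I would write out the difference explicitly. Since $F(\lambda) = T - \frac{1}{\lambda}I$ and $F_h(\lambda) = T_h - \frac{1}{\lambda}I$, and since $p_h$ commutes with multiplication by scalars, the $\frac{1}{\lambda} p_h f$ contributions cancel, giving
\begin{equation*}
F_h(\lambda) p_h f - p_h F(\lambda) f \;=\; T_h p_h f - p_h T f \;=\; u_h - p_h u,
\end{equation*}
where $u = Tf \in H^1_0(D)$ solves \eqref{PriPro} with data $f$ and $u_h = T_h p_h f \in V_h$ solves \eqref{FemPro} with data $f$. Note the crucial observation that $\lambda$ has disappeared from the expression, so no $\lambda$-uniformity issue arises at this step.

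Next I would use the triangle inequality with $u$ as intermediate,
\begin{equation*}
\|u_h - p_h u\| \;\leq\; \|u_h - u\| + \|u - p_h u\|.
\end{equation*}
The first term is controlled by the $L^2$ error estimate \eqref{L2 error}, giving $\|u_h - u\| \leq C h^{2\beta} \|f\| \to 0$ as $h \to 0$. For the second term I would apply the $L^2$-projection property noted right after \eqref{FemPro}, namely $\|p_h g - g\| \to 0$ for every $g \in L^2(D)$, with the choice $g = u$; since $u \in H^1_0(D) \subset L^2(D)$, this yields $\|u - p_h u\| \to 0$. Combining the two bounds completes the proof.

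There is no real obstacle here; the argument is a short verification. The only thing worth being careful about is noticing that the identity part of $F$ and $F_h$ agrees exactly on $V_h$ after applying $p_h$ on both sides, so the remaining problem involves only the solution operators $T$ and $T_h$, to which the standard a priori estimate applies directly without any dependence on $\lambda$.
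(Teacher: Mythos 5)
Your proposal is correct and follows essentially the same route as the paper: cancel the $\frac{1}{\lambda}p_hf$ terms, reduce to $\|u_h-p_hu\|$, insert $u$ and apply the triangle inequality together with \eqref{L2 error} and the $L^2$-projection convergence. The only cosmetic difference is that the paper also records the quantitative bound $Ch^{2\beta}\|f\|$ for the combined error, whereas you only conclude convergence to zero, which is all the lemma asserts.
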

\begin{proof}
From \eqref{L2 error}, we have that
\begin{align*}
\|F_h(\lambda)p_hf-p_hF(\lambda)f\|
&= \left\|T_h(\lambda)p_hf-\frac{1}{\lambda}p_hf-p_hT(\lambda)f+\frac{1}{\lambda}p_hf \right\|\\
&=\|u_h-u+u-p_hu\|\leq\|u-u_h\|+\|u-p_hu\|\leq Ch^{2\beta}\|f\|.
\end{align*}
\end{proof}

Now we are ready to present the main convergence theorem.
\begin{theorem}
Let $\lambda_0\in \sigma(F)$. Assume that $h$ is small enough. Then there exists $\lambda_h\in\sigma(F_h)$
such that $\lambda_h\rightarrow\lambda_0$ as $h\rightarrow 0$. For any sequence $\lambda_h\in\sigma(F_h)$ 
the following estimate hold
\begin{equation}
\label{EVorder} |\lambda_h-\lambda_0|\leq Ch^{\frac{2\beta}{r_0}},
\end{equation}
where $r_0$ is the maximum rank of eigenvectors.
\end{theorem}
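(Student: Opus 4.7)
The plan is to verify hypotheses (b1)--(b4) of Section 2 for the discrete system $\{F_h(\cdot)\}$ with $X_n = Y_n = V_h$ and $p_n = q_n = p_h$, and then invoke the abstract convergence theorem of Karma \cite{karma1996I, karma1996II} to deduce both the existence of $\lambda_h \in \sigma(F_h)$ with $\lambda_h \to \lambda_0$ and the quantitative error estimate. The rate will come from combining Karma's abstract estimate with the operator-norm bound \eqref{F-F_h}.

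For (b1), the $L^2$-projection $p_h$ is bounded with $\|p_h\| \le 1$ and satisfies $p_h f \to f$ in $L^2(D)$, so $\|p_h f\| \to \|f\|$. Condition (b2) is exactly Lemma 3.2, and (b3) is precisely Lemma 3.3 (noting that $q_n = p_n = p_h$ here and $p_h$ commutes with the $\frac{1}{\lambda}I$ part, so the identity in Lemma 3.3 has the required form). Thus the main work is verifying the regularity condition (b4): if $f_h \in V_h$ with $\|f_h\| \le 1$ and $\{F_h(\lambda) f_h\}$ is Karma-compact, then $\{f_h\}$ itself must be Karma-compact. I would argue this by writing $\frac{1}{\lambda} f_h = T_h f_h - F_h(\lambda) f_h$; since $T_h f_h = u_h$ and $\|u_h - T f_h\| \le C h^{2\beta}\|f_h\|$, and $T$ is compact as an operator on $L^2(D)$, the sequence $\{T_h f_h\}$ is Karma-compact (every subsequence has a sub-subsequence whose images converge, up to the usual projection manipulation $\|T f_h - T_h p_h f_h\| \to 0$). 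Combined with the Karma-compactness of $\{F_h(\lambda)f_h\}$ by hypothesis and $\lambda \neq 0$, this forces $\{f_h\}$ to be Karma-compact.

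The main obstacle I expect is (b4): one must translate the $L^2$-compactness of $T$ into the discrete Karma-compactness framework, which requires a careful bookkeeping of how sequences $f_h$ sitting in different spaces $V_h$ are related to limits in $L^2(D)$ through the projectors $p_h$. The key analytic input is the uniform convergence $\|T - T_h p_h\| \le C h^{2\beta} \to 0$ together with compactness of $T$, which lets us replace $T_h f_h$ by $T f_h$ up to a vanishing error and then extract a convergent subsequence.

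Once (b1)--(b4) are verified, Karma's theorem yields the qualitative convergence $\lambda_h \to \lambda_0$ and an abstract estimate of the form $|\lambda_h - \lambda_0|^{r_0} \le C \, \delta_h(\lambda_0)$, where $\delta_h(\lambda_0)$ measures the approximation quality of $F_h$ to $F$ near $\lambda_0$ (suitably restricted to a root function direction), and $r_0$ is the maximal rank of the associated eigenvectors. The bound \eqref{F-F_h} gives $\delta_h(\lambda_0) \le C h^{2\beta}$ uniformly on a neighborhood of $\lambda_0$, so extracting the $r_0$-th root produces the claimed rate
\begin{equation*}
|\lambda_h - \lambda_0| \le C h^{2\beta / r_0}.
\end{equation*}
This completes the proof, modulo routine verification that the ingredients (compactness of $T$, $L^2$-error bound \eqref{L2 error}, and the approximation property of $p_h$) assemble into the Karma framework as outlined.
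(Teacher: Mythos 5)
Your proposal is correct, and it reaches the same conclusion via the same overall strategy (verify (b1)--(b4), then cite Karma), but your verification of the regularity condition (b4) is genuinely different from, and simpler than, the paper's. The paper proves (b4) by splitting into the cases $\lambda\in\rho(F)$ and $\lambda\in\sigma(F)$: in the resolvent case it inverts $F(\lambda)$ and absorbs the $O(h^{2\beta})$ perturbation from \eqref{F-F_h}; in the spectral case it decomposes $v_n$ along the finite-dimensional generalized eigenspace $G(\lambda)$, handles the quotient part as before, and extracts a convergent subsequence of $P_{G(\lambda)}v_n$ by finite-dimensionality. You instead exploit the specific structure $F_h(\lambda)=T_h-\tfrac{1}{\lambda}I$ directly: writing $f_h=\lambda\bigl(T_hf_h-F_h(\lambda)f_h\bigr)$, the second term is Karma-compact by hypothesis, and the first is Karma-compact because $\Vert T-T_hp_h\Vert\le Ch^{2\beta}$ lets you replace $T_hf_h$ by $Tf_h$ up to a vanishing error and then use compactness of $T$ on the bounded sequence $\{f_h\}\subset L^2(D)$. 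This avoids the resolvent/spectrum dichotomy and the generalized-eigenspace machinery entirely, at the cost of being tied to the ``compact minus scalar multiple of the identity'' form of $F$; the paper's argument is more template-like for general holomorphic Fredholm families where no such splitting is available. Your sketch of the subsequence bookkeeping (nested extractions, $p_hw\to w$, $\lambda\neq 0$) is the right routine detail to fill in, and your final step---feeding the uniform bound \eqref{F-F_h} into Karma's Theorem 2 to get $|\lambda_h-\lambda_0|\le Ch^{2\beta/r_0}$---matches what the paper does implicitly when it says the estimate ``follows readily.''
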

\begin{proof}
Let $\{h_n\}$ be a sequence of sufficiently small positive numbers with
$h_n\rightarrow 0$ as $n\rightarrow\infty$ and $F_n(\lambda):=F_{h_n}(\lambda)$, $V_n:=V_{h_n}$ and $p_n:=p_{h_n}$.
Clearly, (b1) holds with $X = Y = L^2(D)$, $X_n = Y_n = V_n$, and $q_n = p_n$. (b2) and (b3) hold due to Lemma 3.2 and 3.3.

To verify (b4), assume that $v_n \in V_n$, $n\in \mathbb{N}^{\prime}\subset \mathbb{N}$  with $\Vert v_n\Vert  \le 1$ and
\begin{equation}\label{assumption3}
\lim\limits_{n\rightarrow\infty}\|F_n(\lambda)v_n-p_ny\|=0,
\end{equation}
for some $y\in L^2(D)$. We estimate $\Vert v_n - p_n v\Vert$ as follows by considering $\lambda\in \rho(F)$ and $\lambda\in \sigma(F)$ separately.

If $\lambda\in \rho(F)$, then $F(\lambda)^{-1}$ exists and is bounded. Let $v = F(\lambda)^{-1}y$.
We have
\begin{align*}
v_n - p_n v = F(\lambda)^{-1}\big((F(\lambda)-F_n(\lambda))(v_n - p_nv) + F_n(\lambda)v_n - p_n F(\lambda) v + p_n F(\lambda) v- F_n(\lambda) p_n v\big).
\end{align*}
Recalling $\Vert F(\lambda)|_{V_n} - F_n(\lambda)\Vert\leqslant Ch_n^{2\beta}$ from \eqref{F-F_h} it holds
\begin{align}
\Vert v_n - p_n v\Vert \leqslant C \big(h_n^{2\beta} \Vert v_n - p_n v\Vert + \Vert F_n(\lambda)v_n - p_n F(\lambda)v\Vert + \Vert p_n F(\lambda)v - F_n(\lambda)p_n v\Vert\big).
\label{v_N}
\end{align}
Using \eqref{assumption3} and Lemma 3.3 we have $\Vert v_n - p_n v\Vert\rightarrow 0$ as $n\rightarrow \infty$.

Assume that $\lambda\in \sigma(F)$. Let $G(\lambda)$ be the finite dimensional generalized eigenspace of $\lambda$ \cite{karma1996I}.
We denote by $P_{G(\lambda)}$ the projection from $L^2(D)$ to $G(\lambda)$, by $F(\lambda)^{-1}$ the inverse of $F(\lambda)|_{L^2(D)/G(\lambda)}$
from $\mathcal{R}(F(\lambda))$ to ${L^2(D)/G(\lambda)}$. Due  \eqref{assumption3}, we have that 
\[
\Vert F(\lambda) v_n - y\Vert \leqslant \Vert F(\lambda)v_n - F_n(\lambda)v_n\Vert + \Vert F_n(\lambda)v_n - p_n y\Vert + \Vert p_n y-y\Vert\rightarrow 0, \quad n \to \infty.
\]
Since $\mathcal{R}(F(\lambda))$ is closed, $y \in \mathcal{R}(F(\lambda))$.
Let  $v^{\prime} := F(\lambda)^{-1} y$ and $v_n^{\prime}:= (I-p_n P_{G(\lambda)})v_n$. Since
\[
\Vert F_n(\lambda)p_n P_{G(\lambda)} v_n\Vert \leqslant \Vert F_n(\lambda) p_n - p_n F(\lambda)\Vert \Vert P_{G(\lambda)}v_n\Vert \rightarrow 0, \quad n \to \infty, 
\] 
by Lemma 3.3, similar to \eqref{v_N}, we deduce that
\begin{align*}
\Vert v_n^{\prime} - p_n v^{\prime}\Vert \leqslant (1-Ch_{n}^{2\beta})^{-1}C\big(\Vert F_n(\lambda)v_n^{\prime} - p_n F(\lambda) v^{\prime}\Vert + \Vert p_n F(\lambda)v^{\prime} - F_n(\lambda)p_n v^{\prime}\Vert\big) \rightarrow 0.
\end{align*}
On the other hand, since $G(\lambda)$ is finite dimensional, there is a subsequence $\mathbb{N}^{\prime\prime}$ and 
$v^{\prime\prime}\in G(\lambda)$ such that $\Vert P_{G(\lambda)}v_n - v^{\prime\prime}\Vert\rightarrow 0$ as $\mathbb{N}^{\prime\prime} \ni n \rightarrow \infty$. Therefore we have
\begin{align*}
\Vert v_n - p_n v\Vert \leqslant \Vert v_n^{\prime} - p_n v^{\prime}\Vert + \Vert p_n P_{G(\lambda)}v_n - p_n v^{\prime\prime}\Vert \rightarrow 0,\quad \text{as}\quad \mathbb{N}^{\prime\prime} \ni n\rightarrow \infty,
\end{align*}
where $v := v^{\prime} + v^{\prime\prime}$.
Now, we have verified (b1)-(b4) which are the conditions for Theorem 2 of \cite{karma1996II}. Then \eqref{EVorder} follows readily.
\end{proof}
\begin{remark}
Under the same conditions of the above theorem, it is possible to obtain error estimates for generalized eigenvectors \cite{karma1996II}, 
which is not included in this paper due to simplicity.
\end{remark}

\section{Numerical Results}
Let $D$ be the unit square $(0,1)\times (0,1)$.
The smallest eigenvalue is $2\pi^2$ and its rank $r_0=1$. We use the linear Lagrange element on a series of uniformly refined meshes for discretization.
The spectral indicator method \cite{huang2016, huang2018, Gong2019} is employed to compute the smallest eigenvalue of \eqref{Fhlambda}.
The results are shown in Table~\ref{table1}, which confirms the second order convergence.
\begin{table}[h]
\centering
\begin{tabular}{cccc}
\hline
$h$ &$\lambda_h$   &$|\lambda-\lambda_h|$  &convergence order\\
\hline
1/10    &19.9281  &0.1889   &- \\
1/20    &19.7871  &0.0479   &1.9795 \\
1/40    &19.7512  &0.0120   &1.9970 \\
1/80    &19.7422  &0.0029   &2.0489 \\
\hline
\end{tabular}
\caption{The smallest Dirichlet eigenvalue of the unit square~(linear Lagrange element)}
\label{table1}
\end{table}

\section*{Aknowledgement}
The research of B. Gong is supported partially by China Postdoctoral Science Foundation Grant 2019M650460.
The research of J. Sun is supported partially by MTU REF.
The research of Z. Zhang is supported partially by the National Natural Science Foundation of China grants NSFC 11871092, NSAF U1930402, and NSF 11926356.


\begin{thebibliography}{10}
\bibitem{babuska1991}
I. Babu\v{s}ka and J. Osborn, Eigenvalue Problems, Handbook of Numerical Analysis, Vol. II, Elsevier Science Publishers, North-Holland, 1991.

\bibitem{Boffi2010AN} D. Boffi,
	{\em Finite element approximation of eigenvalue problems.}
	Acta Numer. \textbf{19} (2010), 1-120.


\bibitem{Gohberg2009}  I. Gohberg and J. Leiterer,
	Holomorphic operator functions of one variable and applications.
	192. Birkh\"{a}user Verlag, Basel, 2009.
\bibitem{Gong2019} B. Gong, J. Sun, T. Turner, C. Zheng, 
	{\em Finite element approximation of the nonlinear transmission eigenvalue problem for anisotropic media}. preprint, 2019.

\bibitem{huang2016}
R. Huang, A. Struthers, J. Sun, and R. Zhang, {\em Recursive integral method for transmission eigenvalues}. 
J. Comput. Phys. 327 (2016), 830-840.
\bibitem{huang2018}
R. Huang, J. Sun and C. Yang, {\em Recursive integral method with Cayley transformation}. 
Numer. Linear Algebra Appl., 25 (2018), no. 6, e2199.

\bibitem{karma1996I}
O. Karma, {\em Approximation in eigenvalue problems for holomorphic Fredholm operator functions.} I. Numer. Funct. Anal. Optim. 17 (1996), no. 3-4, 365-387.
\bibitem{karma1996II}
O. Karma, {\em Approximation in eigenvalue problems for holomorphic Fredholm operator functions. II. (Convergence rate).} Numer. Funct. Anal. Optim. 17 (1996), no. 3-4, 389-408.






\bibitem{sun2016}
J. Sun and A. Zhou, Finite element methods for eigenvalue problem. CRC Press, Taylor Francis Group, Boca Raton, London, New York, 2016.
%
%
\end{thebibliography}
\end{document}